\newtheorem{theorem}{Theorem}
\newtheorem{proposition}[theorem]{Proposition}
\theoremstyle{definition}
\newtheorem{definition}[theorem]{Definition}
\numberwithin{equation}{section} \numberwithin{theorem}{section}
\theoremstyle{remark}
\def\z{\,_{\dot z}\,}
\def\w{\,_{\dot w}\,}
\def\zw{\,_{\dot {z+w}}\,}
\def\vac{|0\rangle}                            
\newcommand\lbb[1]{\label{#1}}
\def\<{\langle}
\def\>{\rangle}
\def\vac{\mathbf{1}}                            
\def\one{{\mathbf{1}}}
\newcommand{\kk}{\mathbf{k}}
\newcommand{\NN}{\mathbb{N}}
\newcommand{\ZZ}{\mathbb{Z}}
\def\de{\delta}
\def\z{\,_{\dot z}\,}
\def\x{\,_{\dot x}\,}
\def\y{\,_{\dot y}\,}
\def\w{\,_{\dot w}\,}
\def\mx{\,_{\dot {-x\ }}\,}
\def\mz{\,_{\dot {-z\ }}\,}
\def\zx{\,_{\dot {z+x}}\,}
\def\zy{\,_{\dot {z+y}}\,}
\def\yz{\,_{\dot {y+z}}\,}
\def\xz{\,_{\dot {x+z}}\,}
\def\mzmx{\,_{\dot{-z-x}}\,}
\def\noi{\noindent}
\def\Mz{\overset{{\scalebox{.64}{$M$}}\hskip .01cm}{\z}}
\def\Mzw{\overset{{\scalebox{.64}{$M$}}\hskip .01cm}{\zw}}
\def\VMz{\overset{{\scalebox{.64}{$V\oplus M$}}\hskip .01cm}{\z}}
\def\VMmz{\overset{{\scalebox{.64}{$V\oplus M$}}\hskip .01cm}{\mz}}
\def\VMx{\overset{{\scalebox{.64}{$V\oplus M$}}\hskip .01cm}{\x}}
\def\VMxz{\overset{{\scalebox{.64}{$V\oplus M$}}\hskip .01cm}{\xz}}
\def\sz{\underaccent{\ \  z}{\ast}}
\def\smz{\underaccent{\ \  -z}{\ast}}
\def\sw{\underaccent{\ \  w}{\ast}}
\def\szw{\underaccent{\ \  z+w}{\ast}}
\def\szz{\underaccent{\, z}{\ast}}
\def\szu{\overset{{\scalebox{.64}{(1)}}\hskip .01cm}{\underaccent{\,  z}{\ast}}}
\def\szd{\overset{{\scalebox{.64}{(2)}}\hskip .01cm}{\underaccent{\,  z}{\ast}}}
\def\zu{\overset{{\scalebox{.64}{(1)}}\hskip .01cm}{\z}}
\def\zd{\overset{{\scalebox{.64}{(2)}}\hskip .01cm}{\z}}
\def\Mw{\overset{{\scalebox{.64}{$M$}}\hskip .01cm}{\w}}
\def\Mzw{\overset{{\scalebox{.64}{$M$}}\hskip .01cm}{\zw}}
\def\Mz{\overset{{\scalebox{.64}{$M$}}\hskip .01cm}{\z}}
\def\Mz{\overset{{\scalebox{.64}{$M$}}\hskip .01cm}{\z}}
\def\ddz{\frac{d}{dz}\,}
\def\VA{(V,\z,\vac,D)}
\def\v2a{(V,\z,\vac,D)}
\begin{document}

\title{Cohomology of vertex algebras}

\author[Jos\'e I. Liberati]{Jos\'e I. Liberati$^*$}
\thanks {\textit{$^{*}$Ciem - CONICET (FAMAF), Medina Allende y
Haya de la Torre, Ciudad Universitaria, (5000) C\'ordoba,
Argentina.
e-mail: joseliberati@gmail.com}}
\address{{\textit{Ciem - CONICET (FAMAF), Medina Allende y
Haya de la Torre, Ciudad Universitaria, (5000) C\'ordoba -
Argentina.  E-mail: joseliberati@gmail.com}}}

\date{March 25, 2016}

\maketitle


\begin{abstract}
Let $V$ be a vertex algebra and $M$ a $V$-module. We define the
first and second cohomology of $V$ with coefficients in $M$, and
we show that the second cohomology $H^{2}(V, M)$  corresponds
bijectively to the set of equivalence classes of square-zero
extensions of $V$ by $M$. In the case that $M=V$, we show that the
second cohomology $H^{2}(V, V)$ corresponds bijectively to the set
of equivalence classes of first order deformations of $V$.
\end{abstract}


\section{Introduction}\lbb{intro}

\

This is the second of a series of papers (see \cite{L}) trying to
extend certain restricted definitions and constructions developed
for vertex operator algebras to the general framework of vertex
algebras without assuming any grading condition neither on the
vertex algebra nor on the modules involved, and we make a strong
emphasis on the commutative associative algebra point of view
instead of the  Lie theoretical point of view.

In this work we define the first and second cohomology of a vertex
algebra $V$ with coefficients in a $V$-module $M$, and we show
that the second cohomology $H^{2}(V, M)$ corresponds bijectively
to the set of equivalence classes of square-zero extensions of $V$
by $M$. In the case that $M=V$, we show that the second cohomology
$H^{2}(V, V)$ corresponds bijectively to the set of equivalence
classes of first order deformations of $V$. If we restrict it to a
vertex algebra given by an associative commutative algebra, then
we clearly obtain the Harrison's cohomology.

In \cite{H1,H2}, Huang developed the cohomology theory of graded
vertex algebras using analy- tical methods and complex variables. In
the present paper we develop the cohomology theory for vertex
algebras (without grading conditions) using algebraic methods and
formal variables, obtaining a very simplified, clear and nice
theory.

In the definition of $H^2(V,M)$,  Huang used two complex
variables. In fact, in the proofs of the theorems that relate the
second cohomology with extensions and deformations \cite{H2},
Huang passed from two complex variables to one formal variable. We
directly use one formal variable (cf. \cite{BKV} versus
\cite{DK}). So, if we add grading conditions to our definitions
and constructions, then we obtain a simpler algebraic version of
the results in \cite{H2}.

We keep using the more comfortable notation introduced in
\cite{L}, where the map $Y(a,z)b$ is replaced by $a\z b$.

This paper is organized as follows. In section 2, we introduce the
basic definitions and notations. In section 3, we define the first
and second cohomology of a vertex algebra $V$ with coefficients in
a module  $M$. In section 4, we show that the second cohomology
$H^{2}(V, M)$ corresponds bijectively to the set of equivalence
classes of square-zero extensions of $V$ by $M$. In section 5, in
the case that $M=V$, we show that the second cohomology $H^{2}(V,
V)$ corresponds bijectively to the set of equivalence classes of
first order deformations of $V$.

Unless otherwise specified, all vector spaces, linear maps and
tensor products are considered over an algebraically closed field
$\kk$ of characteristic 0.

\

\vskip .5cm

\section{Definitions and notation}\lbb{def }

\

In order to make a self-contained paper, in  this section we
present the notion of  vertex algebra and their modules. Our
presentation and notation differ from the usual one because we
want to emphasize the point of view that vertex algebras are
analog to commutative associative algebras with unit.

\

Throughout this work, we define $(x + y)^n$ for $n\in\ZZ$ (in
particular, for $n < 0$) to be the formal series
\begin{align*}
(x+y)^n=\sum_{k \in \mathbb{Z}_+} \binom{n}{k} x^{n-k}y^k,
\end{align*}
where $\binom{n}{k}=\frac{n(n-1)...(n-k+1)}{k!}$.

\

\definition \label{def VA}
A {\it vertex algebra} is a quadruple $\VA$ that consists of a
vector space $V$ equipped with a linear map
 \begin{align}\label{zzzz}
 \z \, :V &\otimes V \longrightarrow V((z))\\
 a &\otimes b \, \, \,  \longmapsto a\z b, \nonumber
 \end{align}
a distinguished vector  $\vac$ and $D\in\,$End$(V)$
satisfying the following axioms ($a,b,c \in  V):$\\
\vskip.1cm \noindent$\bullet$ {\it Unit}:
\begin{align*}
\vac \z a= a \quad  \textrm{ and }& \quad a \z
\vac=e^{zD}a;
\end{align*}

\noindent $\bullet$ {\it Translation - Derivation}: \vskip .1cm
\begin{align*}
(Da)\z b= \ddz (a\z b), \quad D(a\z b)=(Da)\z b + a\z (Db);
\end{align*}
\vskip .3cm

\noindent$\bullet$ {\it Commutativity}:
\begin{align*}
a \z b= e^{zD} (b\mz a);
\end{align*}
\vskip .3cm

\noindent$\bullet$  {\it Associativity}: For any $a,b,c\in V$,
there exist $l\in \NN$ such that

\begin{align} \label{associatt}
(z+w)^l\, \ (a \z b) \w c = (z+w)^l\, \  a\zw (b\w c) .
\end{align}
\vskip.3cm

\

Observe that the standard notation $Y(a,z)\, b$ for the
$z$-product in (\ref{zzzz}) has been changed. We adopted this
notation following the practical idea  of the  $\lambda$-bracket
in the notion of Lie conformal algebra (also called vertex Lie
algebra in the literature), see \cite{K}.

The commutativity axiom is known in the literature as
skew-symmetry (see \cite{LL, K}), but for us it really corresponds
to commutativity. We want to emphasize the point of view of a
vertex algebra as a generalization of an associative commutative
algebra with unit (and a derivation), as in \cite{BK, L1}, and
having in mind the usual (trivial) example of holomorphic vertex
algebra given by any associative commutative algebra $A$ with unit
$\one$ and a derivation $D$, with the $\z$-product defined by $a\z
b:=(e^{zD} a)\cdot b$. In the special case of $D=0$, we simply
have an associative commutative algebra with unit and $a\z b:= a
\cdot b$, and with the definition presented before, it is
immediate to see, without any computation, that this is a vertex
algebra, and this is the reason for us to call the usual
skew-symmetry axiom as commutativity.

An equivalent definition can be obtained by replacing the
associativity axiom by the {\it associator formula} (which is
equivalent to what is  known in the literature as the iterate
formula (see \cite{LL}, p.54-55) or the $n$-product identity (see
\cite{BK})):
\begin{align} \label{associator}
(a \z b) \y c- a\zy (b\y c)=  b\y (a\yz c - a\zy c),
\end{align}
for $a,b,c\in V$. Observe that in the last term of the associator
formula we can not use linearity to write it as a difference of $\
b\,_{\dot w}\big(\,a\,_{\dot {w+z}}\,c\big)$ and $\ b\,_{\dot
w}\big(\,a\,_{\dot {z+w}}\,c\big)$, because neither of these
expressions in general exists (see \cite{LL}, p.55). This
alternative definition of vertex algebra using the (iterate
formula or) the associator formula is essentially the original
definition given by Borcherds \cite{Bo1}, but in our case it is
written using the generating series in $z$ instead of the
$n$-products.

It is well known (see \cite{LL}) that in the two equivalent
definitions that we presented, some of the axioms can  be obtained
from the others, but we prefer to make
 emphasis    on the properties of $D$ and the explicit formula for the multiplication
 by the unit.

\

\begin{definition} A {\it module} over a vertex algebra $V$ is
a vector space $M$ equipped  with an endomorphism $d$ of $M$ and a
linear map
    \begin{align*}
      &V\otimes M \longrightarrow M((z))  \\
      &(a,u)\ \longmapsto \ a\, \Mz \, u
    \end{align*}
satisfying the following axioms ($a \in  V$ and $u\in M$):
\\
\vskip.1cm \noindent$\bullet$ {\it Unit}:
\begin{align*}
\label{1} \vac \Mz u= u;
\end{align*}
$\bullet$ {\it Translation - Derivation}:
\begin{align*}
(Da)\Mz u= \ddz (a\Mz u), \quad d(a\Mz u)=(Da)\Mz u + a\Mz (d\,u);
\end{align*}
\vskip.2cm \noindent $\bullet$  {\it Associativity}:  For any
$a,b\in V$ and $u\in M$, there exist $l\in\NN$ such that

\begin{equation}\label{aaaa}
(z+w)^l\, \ (\,a\, _{\dot z} \, b\, )\Mw u\ =(z+w)^l\ \,a \Mzw (\, b\Mw u\,)\ .
\end{equation}

\end{definition}

\

\vskip .2cm

Sometimes, if everything is clear, we shall use $a\z u$ instead of
$a \Mz u$. Obviously, $V$ is a module over $V$. We follow
\cite{BK}, in the definition of module, because we need to work
with this $\kk[d]$-module structure
 (similar to the situation of  Lie conformal algebras \cite{K}).

Any $V$-module $M$ satisfies the {\it weak commutativity} or {\it
locality}: for all $a,b\in V$, there exist $k\in\NN$ such that
\begin{equation}\label{locality}
    (x-y)^k \, a\x (b\y u)= (x-y)^k \,b\y (a\x u) \quad  \hbox{for all }u\in M.
\end{equation}

Let $M$ and $N$ be $V$-modules, a {\it $V$-homomorphism} or a {\it
homomorphism of $V$-modules} from $M$ to $N$ is a linear map
$\varphi:M\to N$ such that for $a\in V$ and $u\in M$
$$
\varphi(a\z u)= a\z \varphi(u) \ \ \mathrm{ and }\ \ \varphi(d u)=
d \varphi(u) .
$$

A subspace $W$ of a vertex algebra $V$ is called an {\it ideal of}
$V$ if $a\z b\in W$ for all $a\in V$ and $b\in W$.

\

\vskip .5cm


\section{Definition of  lower cohomologies}\lbb{first}


\

Let $V$ be a vertex algebra, and $M$ a (left) $V$-module. We
define the {\it right action} of $V$ on $M$ by
\begin{equation*}\label{}
m\z a=e^{zd}(a\mz m),
\end{equation*}

\vskip .3cm

\noi for $a\in V$ and $m\in M$. A linear map $f:V\rightarrow M$ is
called a {\it vertex derivation} if

\begin{equation*}
f(a\z b)=a\z f(b)+ f(a)\z b
\end{equation*}

\vskip .3cm

\noi for $a,b\in V$. We denote by VDer$(V,M)$ the space of all
such derivations.

Now, we consider
$$
0\xrightarrow{\quad\ \
\quad}C^0(V,M)\xrightarrow{\quad\de_0\quad}C^1(V,M)\xrightarrow{\quad\de_1\quad}C^2(V,M)
$$
\vskip .3cm

\noi with $C^0(V,M)=M$ and $\de_0\equiv 0$, therefore
$H^0(V,M)=$Ker $\de_0=M$. We define

\begin{equation*}
C^1(V,M)=\{ \ g\in {\rm Hom}_\kk (V,M) \ : \ g(da)=dg(a) \ \ {\rm
and }\ \  g(\vac)=0\ \}
\end{equation*}

\vskip .3cm

\noi and we take for $g\in C^1(V,M)$

\begin{equation*}
(\de_1 \, g)_z(a,b)=a\z g(b)-g(a\z b)+g(a)\z b
\end{equation*}

\vskip .4cm

\noi with $a,b\in V$. Hence, $H^1(V,M)=$Ker $\de_1=$VDer$(V,M)$.
We define $C^2(V,M)$ as the space of linear functions $f_z:V
\otimes V\to M((z))$ satisfying (for all $a,b\in V$)

\vskip.2cm

$\ast$
{\it Unit}:
\begin{align}\label{u}
f_z(a,\vac )=f_z(\vac , b)=0
\end{align}

$\ast$ {\it Translation - Derivation}:
\begin{align}\label{t}
 \frac{d}{dz}
f_z(a,b)=f_z(da,b),\qquad d(f_z(a,b))=f_z(da,b)+f_z(a,db), \
\end{align}

$\ast$
 {\it Symmetry}:
\begin{equation}\label{s}
f_z(a,b)=e^{zd}f_{-z}(b,a).
\end{equation}

\vskip .2cm

Now, let $Z^2(V,M)$ be the space of functions $f_z\in C^2(V,M)$
that satisfy that for all $a,b,c\in V$ there exists $n\in
\mathbb{N}$ such that
\begin{equation}\label{f-associa}
    (x+z)^n \Big[f_z(a\x b,c)+f_x(a,b)\z c \Big]=(x+z)^n \Big[a\xz
    f_z(b,c)+f_{x+z}(a,b\z c) \Big]
\end{equation}
and define $H^2(V,M)=Z^2(V,M)/$Im $\de_1$. If $V$ is an
associative commutative algebra, it is clear that we obtain the
Harrison's cohomology.

The associativity condition (\ref{associatt}) of a vertex algebra
produce the condition (\ref{f-associa}). But if we impose the
associator formula (\ref{associator}), we shall see after the
proof of Theorem \ref{h2-ext} that (\ref{f-associa}) could be
replaced by
\begin{align}\label{eee}
   0= f_z(a\x b,c)+f_x(a,b)\z c & -a\xz
    f_z(b,c)-f_{x+z}(a,b\z c)\\
    & - b\z(f_{z+x}(a,c)
    -f_{x+z}(a,c))-f_z(b,a\zx c- a\xz c).\nonumber
\end{align}
for all $a,b,c\in V$, and the RHS of (\ref{eee}) could be the
definition of $(\de_2 f)_{z,x}(a,b,c)$. In this case
$H^2(V,M)=$Ker $\de_2/$Im $\de_1$.

\begin{proposition}
$H^2(V,M)$ is well defined, that is Im $\de_1\subseteq Z^2(V,M)$.
\end{proposition}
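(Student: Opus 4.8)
The plan is to verify, for an arbitrary $g\in C^1(V,M)$, that the $2$-cochain $f:=\de_1 g$ given by $f_z(a,b)=a\z g(b)-g(a\z b)+g(a)\z b$ belongs to $Z^2(V,M)$; this amounts to two things: (i) $f\in C^2(V,M)$, i.e.\ $f$ satisfies the unit axiom \eqref{u}, the translation--derivation axioms \eqref{t}, and the symmetry axiom \eqref{s}; and (ii) $f$ satisfies the associativity condition \eqref{f-associa}. Note first that $f_z(a,b)\in M((z))$ and is bilinear, so $f$ lies in the ambient space of $C^2(V,M)$.

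For (i) I would begin by recording two elementary consequences of the module translation--derivation axioms for the right action $m\z a=e^{zd}(a\mz m)$: namely $\tfrac{d}{dz}(m\z a)=(dm)\z a$ and $d(m\z a)=(dm)\z a+m\z(Da)$. Given these, the unit axiom for $f$ is immediate from $g(\vac)=0$, from $a\z\vac=e^{zD}a$ together with $g\circ e^{zD}=e^{zd}\circ g$, and from $\vac\z a=a$. The two translation--derivation identities for $f$ follow by applying $\tfrac{d}{dz}$, respectively $d$, to $f_z(a,b)$ termwise and invoking the translation--derivation axioms of $V$ and of $M$, the two properties of the right action just stated, and $g\circ D=d\circ g$. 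Symmetry follows by expanding $e^{zd}f_{-z}(b,a)$ and converting its three summands into those of $f_z(a,b)$, using the definition of the right action and the commutativity relation $b\mz a=e^{-zD}(a\z b)$ in $V$.

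For (ii) I would substitute $f=\de_1 g$ into all four terms of the bracketed expression in \eqref{f-associa} and collect. Two pairs cancel identically: the summands $g(a\x b)\z c$ produced by $f_z(a\x b,c)$ and by $f_x(a,b)\z c$, and the summands $a\xz g(b\z c)$ produced by $a\xz f_z(b,c)$ and by $f_{x+z}(a,b\z c)$. Two further pairs are each killed by a power of $(x+z)$: the pair $(a\x b)\z g(c)-a\xz\bigl(b\z g(c)\bigr)$ by the module associativity axiom \eqref{aaaa} with module element $g(c)$, and the pair $g\bigl((a\x b)\z c\bigr)-g\bigl(a\xz(b\z c)\bigr)$ because the associativity axiom \eqref{associatt} of $V$ (viewing $V$ as a module over itself) survives application of the linear map $g$ coefficientwise, $g$ commuting with multiplication by $(x+z)^n$. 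What remains is
\begin{equation*}
(a\x g(b))\z c+(g(a)\x b)\z c-a\xz\bigl(g(b)\z c\bigr)-g(a)\xz(b\z c),
\end{equation*}
which I would handle as two more pairs. Using the definition of the right action and the conjugation identities $e^{wd}(a\z m)=a\zw(e^{wd}m)$ and $(e^{wD}a)\z m=a\zw m$, the first pair reduces, up to the operator $e^{zd}$, to $c\mz(a\x g(b))-a\x\bigl(c\mz g(b)\bigr)$, which is killed by a power of $(x+z)$ by weak commutativity \eqref{locality} in $M$; and the second pair reduces, up to $e^{(x+z)d}$ and after applying $b\z c=e^{zD}(c\mz b)$, to $c\mzmx\bigl(b\mx g(a)\bigr)-(c\mz b)\mx g(a)$, which is killed by a power of $(x+z)$ again by \eqref{aaaa}. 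Taking $n$ to be the largest of the finitely many exponents occurring above then establishes \eqref{f-associa}, and hence $f\in Z^2(V,M)$.

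The main obstacle is the bookkeeping in part (ii). One must keep track of which space of formal series each expression lives in, so that distributing the right action over the three summands of $f_x(a,b)$ and of $f_z(b,c)$, and all the subsequent rearrangements, are legitimate; one must derive the handful of auxiliary facts about the right action and the two mixed associativity and locality statements, since these are not literally among the axioms; and one must check that the finitely many exponents of $(x+z)$ that appear can be dominated by a single $n$ valid for the given $a,b,c$. None of these steps is deep, but the computation in the third paragraph is where all the care is required.
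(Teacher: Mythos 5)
Your proposal is correct and follows essentially the same route as the paper's own proof: the same verification of the unit, translation--derivation, and symmetry axioms via the right-action identities, and the same decomposition of the associativity expression into the two identically cancelling pairs plus four pairs handled respectively by module associativity, associativity of $V$, locality, and module associativity again after rewriting with the right action. No substantive differences or gaps.
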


\begin{proof}
Let $g:V\to M$ such that $dg(a)=g(da)$ and $g(\vac)=0$. We define
$f_z:V\otimes V\to M((z))$ by $f_z(a,b)=a\z g(b)-g(a\z b)+g(a)\z
b$. Now,
$$
f_z(\vac,b)=\vac\z g(b)-g(\vac\z b)+g(\vac)\z b=g(b)-g(b)=0,
$$
and
$$
f_z(a,\vac)=a\z g( \vac)-g(a\z \vac)+g(a)\z \vac=a\z
g(\vac)-g(e^{zd} a)+e^{zd} g(a)=0,
$$
therefore $f_z$ satisfies (\ref{u}). Now we prove that it
satisfies (\ref{t}):

\begin{align*}
\frac{d}{dz}f_z(a,b)=& da\z g(b)-g(da \z b)+d e^{zd}b\mz
g(a)-e^{zd}(db)\mz g(a)\\=& da\z g(b)-g(da \z b)+e^{zd}b\mz d
g(a)=f_z(da,b),
\end{align*}
and
\begin{align*}
    d(f_z(a,b))=& da\z g(b)+ a\z dg(b)-g(da\z b)-g(a\z
    db)+e^{zd}(db)\mz g(a)+e^{zd} b\mz dg(a) \\
    =& f_z(da,b)+f_z(a,db).
\end{align*}
The symmetry (\ref{s}) follows by
\begin{align*}
    e^{zd} f_{-z}(b,a)=& e^{zd} \, b\mz g(a)-e^{zd} g(b\mz
    a)+e^{zd}\left(e^{-zd} a\z g(b)\right)\\
    =& g(a)\z b-g(e^{zd} b\mz a)+a\z g(b)=f_z (a,b).
\end{align*}
Now, we should check (\ref{f-associa}):
\begin{align}\label{77}
f_z(a\x b,c)&+f_x(a,b)\z c - a\xz
    f_z(b,c)-f_{x+z}(a,b\z c)=\qquad \qquad \qquad \nonumber\\
    &=(a\x b)\z g(c)-a\xz(b\z
    g(c))
    -g((a\x b)\z c)+g(a\xz (b\z
    c))\\
    &\ \ \ \ -a\xz(g(b)\z c)
    +(a\x g(b))\z c +(g(a)\x b)\z c-g(a)\xz (b\z c)\nonumber
\end{align}
using the associativity (\ref{aaaa}) of $M$, the first two terms
in the RHS is zero after the multiplication by $(x+z)^n$ for some
$n\in \mathbb{N}$. Similarly with the third and fourth terms, by
using the associativity (\ref{associatt}) of $V$. Now, consider
the fifth and sixth terms in (\ref{77}). Using that $e^{zd} a\x
w=a\xz (e^{zd} w)$, we have
\begin{align*}
    (a\x g(b))\z c -a\xz(g(b)\z c)&=e^{zd} c\mz(a\x g(b))-a\xz
    (e^{zd} c\mz g(b))\\ &=e^{zd}[c\mz(a\x g(b))- a\x(c\mz g(b))]
\end{align*}
which is zero after the multiplication by $(x+z)^n$ for some $n\in
\mathbb{N}$, due to locality (\ref{locality}) in the action of $V$
on $M$.  Finally, consider the seventh and eighth terms in
(\ref{77}).
 On one hand, we have
 $$
 (g(a)\x b)\z c=e^{zd}c\mz (g(a)\x b)=e^{zd}c\mz(e^{xd} b\mx
 g(a))=e^{(z+x)d} c\mzmx (b\mx g(a)),
 $$
 and on the other hand, we have
 $$
 g(a)\xz(b\z c)=e^{xd}(b\z c)\mx (e^{zd} g(a))=e^{xd}(e^{zd} c
 \mz b)\mx(e^{zd} g(a))=e^{(x+z)d} (c\mz b)\mx g(a),
 $$
 then using the associativity (\ref{aaaa}), both terms are equal after the
 multiplication by $(x+z)^n$ for some $n\in \mathbb{N}$, finishing
 the proof.
\end{proof}

\

\vskip .5cm


\section{Second cohomology and square-zero extensions}\lbb{ex}


\

\begin{definition}
(a) Let $V$ be a vertex algebra. A {\it square-zero ideal of $V$}
is an ideal $W$ of $V$ such that for any $a,b\in W$, $a \x b=0$.

(b) Let $V$ be a vertex algebra and $M$ a  $V$-module. A {\it
square-zero extension $(\Lambda, f, g)$ of $V$ by $M$} is a vertex
algebra $\Lambda$ together with a surjective homomorphism $f:
\Lambda \to V$ of  vertex algebras such that $\ker f$ is a
square-zero ideal of $\Lambda$ (so that $\ker f$ has the structure
of a $V$-module) and an injective homomorphism $g$ of $V$-modules
from $M$ to $\Lambda$ such that $g(M)=\ker f$.

(c) Two square-zero extensions $(\Lambda_{1}, f_{1}, g_{1})$ and
$(\Lambda_{2}, f_{2}, g_{2})$ of $V$ by $M$ are {\it equivalent}
if there exists an isomorphism of vertex algebras $h:
\Lambda_{1}\to \Lambda_{2}$ such that the diagram
$$\begin{CD}
0@>>> M @>>g_{1}> \Lambda_{1} @>>f_{1}>V @>>>0\\
@. @V1_{M}VV @VhVV @VV1_{V}V\\
0@>>> M @>>g_{2}> \Lambda_{2} @>>f_{2}>V @>>>0,
\end{CD}$$
is commutative.
\end{definition}

Now, we have the following result:

\begin{theorem}\label{h2-ext}
Let $V$ be a  vertex algebra and $M$ a $V$-module. Then the set of
the equivalence classes of square-zero extensions of $V$ by $M$
corresponds bijectively to $H^{2}(V, M)$.
\end{theorem}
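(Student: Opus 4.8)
The plan is to establish the bijection by constructing explicit maps in both directions and checking they are mutually inverse.

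\textbf{From extensions to cohomology classes.} Given a square-zero extension $(\Lambda, f, g)$, I would choose a linear section $s: V \to \Lambda$ of $f$ (as a $\kk$-linear map) normalized so that $s(\vac_V) = \vac_\Lambda$ and $s(Da) = Ds(a)$; such a section exists because $\ker f = g(M)$ is a $\kk[D]$-submodule and $V \cong \Lambda/\ker f$ as $\kk[D]$-modules (here I use that we are over a field of characteristic $0$, so $\kk[D]$-module sequences of this shape split appropriately, or more simply one lifts a basis compatibly with the $D$-action). Then for $a,b \in V$ the element $s(a)\z s(b) - s(a\z b)$ lies in $\ker f = g(M)$, since $f$ is a vertex algebra homomorphism and $f(s(a)\z s(b)) = a \z b = f(s(a\z b))$. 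Define $f_z(a,b) \in M((z))$ by $g(f_z(a,b)) = s(a)\z s(b) - s(a\z b)$. I would then verify that $f_z \in Z^2(V,M)$: the unit axiom (\ref{u}) follows from $s(\vac) = \vac$ and $a\z\vac = e^{zD}a$ together with the corresponding identity in $\Lambda$; the translation–derivation identities (\ref{t}) follow from $s$ being $D$-equivariant and the translation axioms in $\Lambda$ and $M$; the symmetry (\ref{s}) follows from commutativity in $\Lambda$ (the identity $a\z b = e^{zD}(b\mz a)$) pushed through $g$ and $f$; and the associativity constraint (\ref{f-associa}) follows by applying the associativity axiom (\ref{associatt}) of $\Lambda$ to $s(a), s(b), s(c)$, expanding, and using that $\ker f$ is square-zero (so all terms that are products of two elements of $\ker f$ vanish) — the surviving terms after multiplying by $(x+z)^n$ for suitable $n$ are exactly the two sides of (\ref{f-associa}), using the $V$-module structure on $\ker f \cong M$. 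A different choice of section $s' = s + g\circ h$ with $h \in C^1(V,M)$ changes $f_z$ by $(\de_1 h)_z$, so the class in $H^2(V,M)$ is well-defined; and one checks equivalent extensions give the same class by transporting sections along the isomorphism $h$ in the defining diagram.

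\textbf{From cohomology classes to extensions.} Conversely, given $f_z \in Z^2(V,M)$, I would put $\Lambda = V \oplus M$ as a $\kk$-vector space, with $D_\Lambda = D \oplus d$, unit $\vac_\Lambda = (\vac, 0)$, and product
\begin{equation*}
(a, u) \,_{\dot z}\, (b, v) = \big(a\z b,\ a\Mz v + u\z b + f_z(a,b)\big),
\end{equation*}
where $u \z b = e^{zd}(b\mz u)$ is the right action. I would check each vertex algebra axiom: the unit axiom uses (\ref{u}); translation–derivation uses (\ref{t}) plus the module axioms; commutativity uses (\ref{s}) plus commutativity of $V$ and the definition of the right action; and the associativity axiom (\ref{associatt}) for $\Lambda$ reduces, after projecting to $V$ (trivial) and to $M$, precisely to the associativity axioms of $V$ and of $M$ together with the constraint (\ref{f-associa}) on $f_z$ — this is the same computation as in the proof that $\mathrm{Im}\,\de_1 \subseteq Z^2(V,M)$, run in reverse. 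The projection $f: \Lambda \to V$, $(a,u)\mapsto a$ is a surjective vertex algebra homomorphism with kernel $0 \oplus M$, which is square-zero because $f_z(0,0) = 0$ and the product of two elements of $0\oplus M$ is $(0, 0\cdot v + u \cdot 0 + f_z(0,0)) = 0$; and $g: M \to \Lambda$, $u \mapsto (0,u)$ is an injective $V$-module map onto $\ker f$. Cohomologous cocycles $f_z$ and $f_z + (\de_1 h)_z$ yield equivalent extensions via $h_\Lambda(a,u) = (a, u + h(a))$.

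\textbf{Mutual inverseness.} Finally I would check the two constructions are inverse to each other. Starting from $f_z$, building $\Lambda = V\oplus M$, and choosing the obvious section $s(a) = (a,0)$ recovers $f_z$ on the nose. Starting from an extension $(\Lambda, f, g)$ with section $s$, the map $V \oplus M \to \Lambda$, $(a,u) \mapsto s(a) + g(u)$ is an isomorphism of vertex algebras making the equivalence diagram commute, so the extension built from the resulting cocycle is equivalent to the original. The main obstacle I anticipate is the associativity bookkeeping: verifying that (\ref{associatt}) for $\Lambda$ is equivalent to (\ref{f-associa}) requires carefully tracking the integer $l$ (resp. $n$) controlling the order of the pole, expanding $(z+w)^l (a\z b)\w c$ and $(z+w)^l a\zw(b\w c)$ componentwise in $V\oplus M$, recognizing the right-action terms via $u\z b = e^{zd}(b\mz u)$, and matching them against the six terms of (\ref{f-associa}) — essentially the computation already carried out in the proof of the preceding proposition, but it must be done in both directions and with the square-zero condition invoked at exactly the right places. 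A secondary technical point is the existence of a $D$-equivariant linear section, which I would handle by noting that over a field of characteristic $0$ one can lift a $\kk$-basis of $V$ through $f$ and then adjust using the locally nilpotent/locally finite structure of $D$ — or, more cleanly, observe that $V((z))$-valued constructions only ever use $s$ up to the ambiguity $g\circ C^1(V,M)$, so any $\kk$-linear section suffices once one restricts attention to the induced cohomology class.
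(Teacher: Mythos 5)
Your proposal is correct and follows essentially the same route as the paper: both trivialize the extension onto $V\oplus M$ via a linear section, read off the cocycle as the $M$-component of the product of lifts, and invert the construction by twisting the product on $V\oplus M$ by a cocycle, with cohomologous cocycles corresponding to the equivalence $h(a,u)=(a,u+g(a))$. The only real difference is presentational --- the paper first normalizes every extension to the form $(V\oplus M,p_1,i_2)$ and isolates the $\psi_z$-terms by citing that the trivial cocycle already gives a vertex algebra, whereas you phrase the forward direction through an explicit section --- and both treatments quietly assume that section can be chosen $D$-equivariant, a point you at least flag.
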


\begin{proof}
Let $(\Lambda, f, g)$  be a square-zero extension of $V$ by $M$.
Then there is an injective linear map  $\Gamma: V\to \Lambda$ such
that the linear map $h: V\oplus M\to \Lambda$ given by $h(a,
u)=\Gamma(a)+g(u)$ is a linear isomorphism. By definition, the
restriction of $h$ to $M$ is the isomorphism $g$ from $M$ to $\ker
f$. Then the vertex algebra structure and the $V$-module structure
on $\Lambda$ give a  vertex algebra structure and a $V$-module
structure on $V\oplus M$ such that the embedding $i_{2}: M\to
V\oplus M$  and the projection $p_{1}: V\oplus M\to V$ are
homomorphisms of vertex algebras. Moreover, $\ker p_{1}$ is a
square-zero ideal of $V\oplus M$, $i_{2}$ is an injective
homomorphism such that $i_{2}(M)=\ker p_{1}$ and the diagram
\begin{equation*}
\begin{CD}
0@>>> M @>i_{2}>> V\oplus M @>p_{1}>>V @>>>0\\
@. @V1_{M}VV @VhVV @VV1_{V}V\\
0@>>> M @>>g> \Lambda @>>f>V @>>>0
\end{CD}
\end{equation*}
of $V$-modules is commutative. So we obtain a  square-zero
extension $(V\oplus M, p_{1}, i_{2})$ equivalent to $(\Lambda, f,
g)$. We need only consider square-zero extension of $V$ by $M$ of
the particular form $(V\oplus M, p_{1}, i_{2})$. Note that the
difference between two such square-zero extensions are in the
$\z$-product  maps. So we use $(V\oplus M, \VMz , p_{1}, i_{2})$
to denote such a square-zero extension.

We now write down the $\z$-product map for $V\oplus M$ explicitly.
Since $(V\oplus M, \VMz, p_{1}, i_{2})$ is a square-zero extension
of $V$, there exists a linear map $\psi_z:V\otimes V\to M((z))$
such that

\begin{eqnarray}\label{Y_V+W}
(a,u)\VMz (b, v)=(a\z b, a\z v+ u\z b+\psi_z(a,b))
\end{eqnarray}
for $a,b\in V$ and $u,v\in M$.

Now, we shall prove that $V\oplus M$ with $\VMz$, the vacuum
vector $\vac_{V\oplus M}=(\vac,0)$ and $d_{V\oplus M}(a,u)=(d_V a,
d_M u)$, is a vertex algebra if and only if $\psi_z \in Z^2(V,M)$.
In order to simplify the proof, observe that in Proposition 4.8.1
in \cite{LL}, they showed that $V\oplus M$ with $\vac_{V\oplus
M}$, $d_{V\oplus M}$ and $\VMz$ corresponding to $\psi_z\equiv 0$,
is a vertex algebra. Therefore, when we check the axioms, we know
that all the terms without $\psi_z$ satisfy the corresponding
equation. So, in order to prove that $V\oplus M$ with $\VMz$ given
by (\ref{Y_V+W}) is a vertex algebra, we only need to see the
terms with $\psi_z$. For example, the element $(\one, 0)$
satisfies (for $a,b\in V$ and $u,v\in M$)
\begin{equation*}
(\vac ,0)\VMz (b,v)=(\vac\z b,\vac\z v +\psi_z(\vac,b))=(b,v)
\end{equation*}
and
\begin{equation*}
(a ,u)\VMz (\vac,0)=(a\z \vac,u\z \vac +\psi_z(a,\vac))=(e^{zd_V}
a, e^{zd_M} (\vac\mz u) + \psi_z(a,\vac))=e^{zd_{V\oplus M}}(a,u)
\end{equation*}
if and only if $\psi_z(\vac ,b)=0=\psi_z(a,\vac)$ for all $a,b\in
V$. From now on, we use $d=d_V=d_M$. A simple computation shows
that $V\oplus M$ satisfies the translation-derivation properties
if and only if for all $a,b\in V$
\begin{align*}
 \frac{d}{dz}
\psi_z(a,b)=\psi_z(da,b),\qquad \hbox{ and }\ \
d(\psi_z(a,b))=\psi_z(da,b)+\psi_z(a,db).
\end{align*}
%

Now, consider the commutativity axiom, that is:
\begin{align*}
    e^{zd_{V \oplus M}}(b,v)\VMmz (a,u)&=
    e^{zd_{V \oplus M}}(b\mz a, b\mz u+e^{-zd} a\z
    v+\psi_{-z}(b,a))\\
    &=(e^{zd} b\mz a,e^{zd} b\mz
    u+a\z v+e^{zd}\psi_{-z}(b,a))
\end{align*}
and $(a,u)\VMz (b,v)=(a\z b, a\z v+e^{zd}b\mz u+\psi_z(a,b))$.
Therefore, $\psi_z$ must satisfy $\psi_z(a,b)=e^{zd}
\psi_{-z}(b,a)$.

Similarly, expanding
$$
\Big((a,u)\VMx(b,v)\Big)\VMz (c,w) \qquad \hbox{ and } \qquad
(a,u)\VMxz \Big((b,v)\VMz(c,w)\Big),
$$
and taking the terms with $\psi_z$, it is easy to see that the
associativity axiom (\ref{associatt}) holds if and only if for all
$a,b,c\in V$ there exists $n\in \mathbb{N}$ such that
\begin{equation}\label{f-associal}
    (x+z)^n \Big[\psi_z(a\x b,c)+\psi_x(a,b)\z c \Big]=(x+z)^n \Big[a\xz
    \psi_z(b,c)+\psi_{x+z}(a,b\z c) \Big],
\end{equation}
proving that $V\oplus M$ is a vertex algebra if and only if
$\psi_z\in Z^2(V,M)$, and together with the projection
$p_1:V\oplus M\to V$ and the embedding $i_2:M\to V\oplus M$,
$V\oplus M$ is a square-zero extension of $V$ by $M$.

Next we prove that two elements of $Z^2(V,M)$ obtained in this way
differ by an element of $\delta_{1}C^{1}(V, M)$ if and only if the
corresponding square-zero extensions of $V$ by $M$ are equivalent.

Let $\psi, \phi \in Z^2(V,M)$ be two such elements obtained from
square-zero extensions $(V\oplus M, \zu, p_{1}, i_{2})$ and
$(V\oplus M, \zd , p_{1}, i_{2})$, respectively. Assume that
$\psi=\phi+\delta_{1}(g)$ where $g \in C^{1}(V, M)$.

We now define a linear map $h: V\oplus M\to V\oplus M$ by
$$h(a, u)=(a, u+g(a))$$
for $a\in V$ and $u\in M$. Then $h$ is a linear isomorphism and it
satisfies (for $a,b\in V$ and $u,v\in M$)
\begin{align}\label{hhh}
    h\left((a,u)\zu (b,v)\right)&=(a\z b,a\z v+u\z b+\psi_z(a,b)+g(a\z b))
    \nonumber\\
    &=(a\z b,a\z v+u\z b+\phi_z(a,b)+a\z g(b)+g(a)\z b)\\
    &=(a, u+g(a))\zd (b,v+g(b))\nonumber\\
    &=h(a,u)\zd h(b,v).\nonumber
\end{align}
Thus $h$ is an isomorphism of vertex algebras from $(V\oplus M,
\zu, (\vac, 0))$ to $(V\oplus M, \zd , (\vac, 0))$ such that the
diagram
\begin{equation}\label{equivalence}
\begin{CD}
0@>>> M @>i_{2}>> V\oplus M @>p_{1}>>V @>>>0\\
@. @V1_{M}VV @VhVV @VV1_{V}V\\
0@>>> M @>i_{2}>> V\oplus M @>p_{1}>>V @>>>0\\
\end{CD}
\end{equation}
is commutative. Thus the two square-zero extensions of $V$ by $M$
are equivalent.

Conversely, let $(V\oplus M, \zu , p_{1}, i_{2})$ and $(V\oplus M,
\zd , p_{1}, i_{2})$ be two equivalent square-zero extensions of
$V$ by $M$. So there exists an isomorphism $h: V\oplus M\to
V\oplus M$ of vertex algebras such that (\ref{equivalence}) is
commutative. Let $h(a, u)=(f(a, u), g(a, u))$ for $a\in V$ and
$u\in M$. Then by (\ref{equivalence}), we have $f(a, u)=a$ and
$g(0, u)=u$. Since $h$ is linear, we have $g(a, u)=g(a, 0)+g(0,
u)=u+g(a, 0)$. So $h(a, u)=(a, u+g(a, 0))$. Taking $g(a)$ to be
$g(a, 0)$, we see that there exists a linear map $g: V\to M$ such
that $h(a, u)=(a, u+g(a))$. Using that $d\,h(a,0)=h(d(a,0))$ and
$h(\vac,0)=(\vac,0)$, it is clear that $g(da)=dg(a)$ and
$g(\vac)=0$. Thus, $g\in C^1(V,M)$.

Let $\psi$ and $\phi$ be elements of $Z^2(V,M)$ obtained from
$(V\oplus M, \zu , p_{1}, i_{2})$ and $(V\oplus M, \zd , p_{1},
i_{2})$, respectively. Then, since $h$ is a homomorphism of vertex
algebras, (\ref{hhh}) holds for $a,b\in V$ and $u,v \in M$.  So
the two expressions in the middle of (\ref{hhh}) are equal. Thus,
we have $\psi=\phi+\de_1(g)$. Therefore, $\psi$ and $\phi$ differ
by an element of $\delta_{1}C^{1}(V, M)$.
\end{proof}

\

In the proof of Theorem \ref{h2-ext} (by taking the terms with
$\psi_z$),  we saw in (\ref{f-associal}) that the associativity
axiom holds in $(V\oplus M, \VMz, (\vac,0))$ if and only if for
all $a,b,c\in V$ there exists $n\in \mathbb{N}$ such that
\begin{equation*}
    (x+z)^n \Big[\psi_z(a\x b,c)+\psi_x(a,b)\z c \Big]=(x+z)^n \Big[a\xz
    \psi_z(b,c)+\psi_{x+z}(a,b\z c) \Big].
\end{equation*}

Recall that  we can replace the associativity axiom
(\ref{associatt}) in the definition of vertex algebra, by the
associator formula (\ref{associator}). By taking the terms with
$\psi_z$,  it is possible to prove  that the associator formula
holds in $(V\oplus M, \VMz, (\vac,0))$ if and only if for all
$a,b,c\in V$ we have
\begin{align*}
   0= \psi_z(a\x b,c)+\psi_x(a,b)\z c & -a\xz
    \psi_z(b,c)-\psi_{x+z}(a,b\z c)\\
    & - b\z(\psi_{z+x}(a,c)
    -\psi_{x+z}(a,c))-\psi_z(b,a\zx c- a\xz c).\nonumber
\end{align*}

We avoid replacing the associativity or associator formula by the
Jacobi identity because we want to make emphasis that a vertex
algebra is a generalization of an associative commutative algebra,
and that the cohomology must be a generalization of Harrison
cohomology.

\vskip .3cm

\

\section{Second cohomology and first order deformations}

\

\begin{definition}
(a) Let $t$ be a formal variable and let $(V,\z , \one, d)$ be a
 vertex algebra. A {\it first order deformation of $V$} is
  a family of $z$-products of the
form
\begin{equation*}
    a\sz b= a\z b + t \, f_z(a,b)
\end{equation*}
with $a,b\in V$, where $f_z:V\otimes V \to V((z))$  is a linear
map (independent of $t$), such that $(V,\szz , \vac,d)$ is a family
of vertex algebras up to the first order in $t$ (ie. modulo
$t^2$). More precisely, the quadruple $(V, \szz, \one, d)$
satisfies the following conditions:

\vskip.1cm

$\ast$ {\it Unit}:
\begin{equation}\label{uu}
 \vac \sz a= a \quad  \textrm{ and } \quad a \sz \vac=e^{zd}a;
\end{equation}

$\ast$ {\it Translation - Derivation}:
\begin{align}\label{dd}
(da)\sz b= \ddz (a\sz b), \quad d(a\sz b)=(da)\sz b + a\sz (db);
\end{align}

$\ast$ {\it Commutativity}:
\begin{align} \label{ssss}
a \sz b= e^{zd} (b\smz a);
\end{align}

$\ast$  {\it Associativity up to the first order in $t$}: For any
$a,b,c\in V$, there exist $l\in \NN$ such that

\begin{align} \label{aa}
(z+w)^l\, \ (a \sz b) \sw c = (z+w)^l\, \  a\, \szw \, (b\sw c) \quad
\hbox{ mod } t^2.
\end{align}
\vskip.3cm

(b) Two first order deformations $\szu$ and $\szd$ of $(V, \z ,
\one,d)$ are {\it equivalent} if there exists a family $\phi_{t}:
V \to V[t]$,  of linear maps of the form $\phi_{t}=1_{V}+tg$ where
$g: V\to V$ is a linear map such that
\begin{equation*}
\phi_{t}(a\szu b)=\phi_{t}(a)\szd \phi_{t}(b) \quad \hbox{ mod }
t^2
\end{equation*}
for $a,b \in V$.
\end{definition}

We have:

\begin{theorem}\label{deform}
Let $V$ be a vertex algebra. Then the set of the equivalence
classes of first order deformations of  $V$ corresponds
bijectively to $H^{2}(V, V)$.
\end{theorem}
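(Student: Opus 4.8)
The plan is to mirror the structure of the proof of Theorem~\ref{h2-ext}, replacing square-zero extensions by first order deformations and the module $M$ by $V$ itself. First I would observe that for a first order deformation $a\sz b = a\z b + t\,f_z(a,b)$, the unit axiom (\ref{uu}) forces $f_z(a,\vac)=f_z(\vac,b)=0$, i.e.\ condition (\ref{u}); the translation--derivation axiom (\ref{dd}), after subtracting the part already satisfied by $\z$, forces exactly (\ref{t}) for $f_z$; and the commutativity axiom (\ref{ssss}), again after cancelling the known $\z$-part, forces the symmetry $f_z(a,b)=e^{zd}f_{-z}(b,a)$, i.e.\ (\ref{s}). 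Hence $f_z\in C^2(V,V)$. Then, expanding both sides of the associativity relation (\ref{aa}) modulo $t^2$, using that the $\psi\equiv 0$ terms already balance (here the "$\psi\equiv 0$ analogue" is just associativity of $V$ itself), and collecting the coefficient of $t$, one gets precisely that for all $a,b,c$ there exists $n\in\NN$ with
\begin{equation*}
(x+z)^n\big[f_z(a\x b,c)+f_x(a,b)\z c\big]=(x+z)^n\big[a\xz f_z(b,c)+f_{x+z}(a,b\z c)\big],
\end{equation*}
which is exactly (\ref{f-associa}). Therefore $(V,\szz,\one,d)$ is a first order deformation if and only if $f_z\in Z^2(V,V)$, giving a surjection from first order deformations onto $Z^2(V,V)$.

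Next I would handle equivalence. Let $\szu$ and $\szd$ be two first order deformations with cocycles $f,f'\in Z^2(V,V)$, and suppose $\phi_t=1_V+tg$ satisfies $\phi_t(a\szu b)=\phi_t(a)\szd\phi_t(b)\bmod t^2$. Expanding both sides and reading off the coefficient of $t$ (the $t^0$ part is the identity $a\z b=a\z b$), one obtains
\begin{equation*}
g(a\z b)+f_z(a,b)=a\z g(b)+g(a)\z b+f'_z(a,b),
\end{equation*}
that is, $f = f' + \de_1(g)$, where one must still check $g\in C^1(V,V)$: applying $\phi_t$ to the unit and $d$-equivariance relations (or equivalently, since $\phi_t$ must be a morphism of the deformed structures which share the same $\vac$ and $d$) gives $g(\vac)=0$ and $g(da)=dg(a)$. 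Conversely, given $f'=f+\de_1(g)$ with $g\in C^1(V,V)$, define $\phi_t=1_V+tg$ and verify by the same computation in reverse that it is an equivalence of deformations. This shows two deformations are equivalent precisely when their cocycles differ by an element of $\de_1 C^1(V,V)=\operatorname{Im}\de_1$.

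Combining the two parts: the assignment "deformation $\mapsto$ its cocycle $f_z$" induces a well-defined map from equivalence classes of first order deformations of $V$ to $Z^2(V,V)/\operatorname{Im}\de_1 = H^2(V,V)$, which is surjective by the first part (every $f_z\in Z^2(V,V)$ yields a genuine first order deformation) and injective by the second part. Hence it is a bijection, as claimed.

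The computations are entirely parallel to those in the proof of Theorem~\ref{h2-ext}, so I would not reproduce them in full; the main point requiring care is the bookkeeping in expanding (\ref{aa}) modulo $t^2$ and isolating the $t$-linear term to recover exactly (\ref{f-associa}) with $M=V$. This is the step where one must be careful that no $t^2$ contributions are inadvertently retained and that the right action $m\z a=e^{zd}(a\mz m)$ used in defining $\de_1$ on $C^1(V,V)$ matches the $\z$-product of $V$ itself (which it does, since the right action of $V$ on the module $V$ is built from the same $\z$). Once that is in place, the rest is formal.
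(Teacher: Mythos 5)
Your proposal is correct and follows essentially the same route as the paper: extract $f_z$ from the deformed product, match each axiom's $t^1$-coefficient to the defining conditions of $Z^2(V,V)$, and show equivalence of deformations corresponds to adding $\de_1(g)$. The one point where you go slightly beyond the paper --- checking that the $g$ arising from an equivalence actually lies in $C^1(V,V)$, i.e.\ $g(\vac)=0$ and $g(da)=dg(a)$ --- is a worthwhile addition (it follows from multiplicativity applied with $a=\vac$ and with $b=\vac$), and does not change the argument.
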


\begin{proof}
Let $\szz$ be a first order deformation of $V$. By definition,
there exists a linear map $f_z:V\otimes V\to V((z))$ such that
\begin{equation}\label{ss}
    a\sz b=a\z b+ t \, f_z(a,b)
\end{equation}
for $a,b\in V$, and $(V,\szz ,\vac, d)$ is a family of vertex
algebras up to the first order in $t$.

The unit properties (\ref{uu}) for $(V,\szz ,\vac, d)$ gives
$$
\vac \sz a=\vac\z a+t\, f_z(\vac,a)=a
$$
and
$$
a\sz \vac=a\z \vac+t\, f_z(a,\vac)=e^{zd} a
$$
for $a\in V$. So, they are equivalent to
\begin{equation}\label{11}
    f_z(a,\vac )=0=f_z(\vac, a) \quad\hbox{ for all } a\in V.
\end{equation}
Similarly, the coefficient of $t^{0}$ of the
Translation-Derivation properties (\ref{dd}) corresponds exactly
to the Translation-Derivation properties of $(V,\z ,\vac, d)$, and
the coefficient of $t^{1}$ in (\ref{dd}) corresponds exactly to
the following properties on $f_z$:
\begin{equation}\label{22}
    \ddz f_z(a,b)=f_z(d a,b), \quad \hbox{ and }\quad
    d(f_z(a,b))=f_z(da,b)+f_z(a,db).
\end{equation}
Now, the coefficient of $t^{0}$ of the Commutativity property
(\ref{ssss}) corresponds exactly to the Commu- tativity property of
$(V,\z ,\vac, d)$, and the coefficient of $t^{1}$ in (\ref{ssss})
corresponds exactly to the following property on $f_z$:
\begin{equation}\label{4444}
    f_z(a,b)=e^{zd} f_{-z}(b,a).
\end{equation}
In the same way, using (\ref{ss}), we take the expansions modulo
$t^2$ of the expressions
$$
(a \sz b) \sw c \quad \hbox{ and }\quad \  a\, \szw \, (b\sw c)
$$
and we consider the coefficients of $t^{0}$ and $t^{1}$ of them.
By a direct computation, we can see that the coefficient of $t^0$
of the associativity property (\ref{aa}) corresponds exactly to
the associativity property of $\z$, and the coefficient of $t^1$
corresponds exactly to the following property: for all $a,b,c\in
V$ there exists $n\in\mathbb{N}$ such that
\begin{equation}\label{33}
    (w+z)^n \Big[f_w(a\z b,c)+f_z(a,b)\w c \Big]=(w+z)^n \Big[a\zw
    f_w(b,c)+f_{z+w}(a,b\w c) \Big].
\end{equation}
Therefore, using (\ref{11}), (\ref{22}),(\ref{4444}) and
(\ref{33}), we have seen that
$$
a\sz b=a\z b+t\, f_z(a,b)
$$
is a first order deformation of $V$
if and only if $f_z\in Z^2(V,V)$.

Now, we prove that two first order deformations of $V$ are
equivalent if and only if the difference between the corresponding
elements in $Z^2(V,V)$ is in Im $\de_1$.

Consider two first order deformations of $V$ given by
$$
a\szu b=a\z b+t\,\psi_z(a,b)\quad \hbox{ and }\quad a\szd b=a\z
b+\phi_z(a,b),
$$
where $\psi_z$ and $\phi_z$ are in $Z^2(V,V)$. They are equivalent
if and only if there exists $f_t=1_V+t\, g$ where $g:V\to V$ is a
linear map such that
\begin{equation}\label{44}
f_{t}(a\szu b)=f_{t}(a)\szd f_{t}(b) \quad \hbox{ mod } t^2
\end{equation}
for $a,b \in V$. Now, since
$$
f_t(a\szu b)=f_t(a\z b+t\,\psi_z(a,b))=a\z b+t\,\psi_z(a,b)+t\,
g(a\z b) \quad \hbox{ mod } t^2
$$
and
$$
f_t(a)\szd f_t (b)=(a+t\,g(a))\szd (b+t\,g(b))= a\z b +t\, a\z
g(b)+t\,g(a)\z b+t\, \phi_z(a,b) \quad \hbox{ mod } t^2
$$
then (\ref{44}) is equivalent to
$$
\psi_z(a,b)-\phi_z(a,b)=a\z g(b)-g(a\z b)+g(a)\z b
$$
for all $a,b\in V$. Therefore, it is equivalent to
$\psi_z-\phi_z=(\de_1 g)_z$, finishing the proof.
\end{proof}

\vskip .5cm

\vskip .5cm

\subsection*{Acknowledgements}
The author would like to thank M. V. Postiguillo and C. Bortni for
their constant help and support throughout this work. Special
thanks to C. Boyallian.

\bibliographystyle{amsalpha}


\end{document}